\documentclass[preprint, number, sort&compress]{elsarticle}
\usepackage[T1]{fontenc}
\usepackage{amsmath}%
\usepackage{amsthm}
\usepackage{amsxtra}%
\usepackage{amsfonts}%
\usepackage{amssymb}%
\usepackage{color,hyperref}
\usepackage{graphicx}
\usepackage{geometry}
\hypersetup{colorlinks,breaklinks,
             linkcolor=blue,urlcolor=blue,
             anchorcolor=blue,citecolor=blue}
\usepackage{color}
\usepackage{array}
\usepackage{bbm}
\usepackage{tikz}

\renewcommand{\bar}[1]{{\overline{#1}}}

\newcommand{\E}{{\mathbb E}}

\newcommand{\R}{\mathbb{R}}
\newcommand{\N}{\mathbb{N}}

\renewcommand{\tilde}[1]{\widetilde{#1}}
\renewcommand{\phi}{\varphi}

\def\XXint#1#2#3{{\setbox0=\hbox{$#1{#2#3}{\int}$ }
\vcenter{\hbox{$#2#3$ }}\kern-.6\wd0}}

\newtheorem{theorem}{Theorem}
\newtheorem{lemma}[theorem]{Lemma}
\newtheorem{corollary}[theorem]{Corollary}

\theoremstyle{definition}
\newtheorem{remark}[theorem]{Remark}

\title{Games on deBruijn Graphs and Cycle Means\tnoteref{t1}}
\tnotetext[t1]{The author gratefully acknowledges support by NSF grant DMS 2407839.}
\author{Nadejda Drenska\corref{cor1}}%
\address{Department of Mathematics, Louisiana State University, USA}%

\cortext[cor1]{Email address: ndrenska@lsu.edu.}

\begin{document} 
\begin{abstract}
deBruijn graphs are widely used in genomics and computer science. In this paper we present a novel approach to finding weights on edges of doubly weighted deBruijn graphs. Given any fixed set of weights on vertices, we use a repeated two-person zero-sum game to find weights on edges so that every cycle on the deBruijn graph has the same average weight, providing explicit formulas. This approach uses minimax optimal strategies of the players. Once the weights on the edges are determined, we observe that they correspond to solving a set of linear equations with as many equations as there are cycles. This is very surprising, because there are many more cycles than unknowns. Moreover we analyze other, related games on graphs.
\end{abstract}
\begin{keyword}
deBruijn graph; repeated two-person game; cycle mean; 
\end{keyword}

\maketitle
\section{Introduction}

deBruijn graphs  \cite{deBruijn, Good, Fredericksen} are regular, directed graphs, belonging to the family of graphs on alphabets. They are widely used in genomics in deNovo assemblies \cite{Pevzner1, Pevzner2, Zerbino, Chikhi, Iqbal} and bioinformatics \cite{Minkin}. They are also used in computer science for distributed hash tables (e.g. Koorde \cite{Kaashoek}), for routing \cite{Cheng}, and for shape analysis \cite{Ulusoy}. Other applications include distributed control \cite{Delvenne} and game theory \cite{Renault}.
Therefore, understanding the structure of deBruijn graphs  is of considerable interest.

Cycles with the lowest and the highest arithmetic means among all cycles on a graph are known as minimal and maximal mean cycles, or optimal cycles. As outlined for example in  \cite{Dasdan, Cuninghame, Maji}, optimal cycles are important for analysis of digital systems, for instance rate analysis of embedded systems. 
In \cite{Maji} the author considers minimum balancing of a directed graph (situations where optimal cycles are analyzed and weights on edges are adjusted), in order to optimize for clock networks performance. According to \cite{Dasdan2}, some applications are in "cycle time and slack optimization for circuits, retiming, timing separation analysis, and rate analysis." \cite{Albrecht} states that  cycles are used in logic chips.

Doubly weighted graphs (graphs with weights on both vertices and edges) appear in various papers, for example \cite{Golitschek, Golitschek2, Shi}.  Key works on the analysis of optimal cycles on doubly weighted graphs are in \cite{Golitschek, Golitschek2}. In \cite{Golitschek}, the author relates optimal cycles of doubly weighted graphs to the approximation of a function of two variables by two functions of one variable, whereas in \cite{Golitschek2} the author makes connections with solving linear systems of inequalities.  

In this paper we analyze the structure of doubly weighted deBruijn graphs and optimal cycles on them. In particular, we use adversarial repeated two person games on a graph. Given all vertex weights, we prove the existence of an assignment of weights on edges, so that every cycle on the deBruijn graph has the same average weight (see Theorem \ref{main} for further description). This is related to solving a linear system of equations on a deBruijn graph. 

Inspiration for the current paper came from a problem considered in \cite{drenska2020pde}, and later on resolved in \cite{calder2021asymp}. These two papers include a game with two adversarial players, which is partially played on a deBruijn graph on two symbols; the weights on vertices are functions of time, space, and vertices on a graph. In the analysis, one player seeks minimal weight accumulation in the long run, which is related to solving a \textit{time-independent problem} with time-independent weights, namely finding weights on edges, so to minimize the heaviest cycle. In this paper we solve a related \textit{time-dependent problem}, namely finding weights on edges so to minimize the heaviest path, which does not depend on a spatial variable, on a deBruijn graph on $n\geq 2$ symbols. In \cite{drenska2020pde} and \cite{calder2021asymp}, the two players have fixed roles (one picks weights, the other - edges), whereas in the general game described in Chapter \ref{related}, the two players have equal roles: on any turn each of the two players can pick weights. 

This paper is organized as follows. First we introduce the problem, then we discuss a game associated to the problem. The solution to this game produces a surprising result, associated to overdetermined systems of linear equations on graphs. Next, we consider two related games, the second game being a generalization of the other one and of the original one. All these games have the same solution. Moreover, this solution solves a discrete Poisson equation.

\section{Notation and Background}\label{notation}
In this section we introduce the deBruijn graph and the game associated to it. 
\subsection{The Graph}\label{graph}
Let $d$ and $n$ be natural numbers. We construct $G$ --- a deBruijn graph on $n$ symbols, $\{0, 1, \dots, n-1\},$ and length of words $d$ --- as follows: 
\begin{itemize}
\item Each vertex is identified through a $n$-ary representation on $d$ symbols. So, there are $N=n^d$ vertices. We can think of each vertex as being identified through a number $0, 1, ..., n^d-1$ (see Figure \ref{f1}). We denote the set of vertices to be$$\mathcal{M}:=\{0,1,\dots, n^d-1 \} =\{0, 1, \dots, n-1\}^d.$$
In the example on Figure \ref{f1} we have that $n=2$ and $d=3.$
\item We denote generic vertices on the graph with $m, \bar{m}\in \mathcal{M}.$
\item We denote the starting vertex with $m_{b}$ and the end vertex with $m_e.$
\item On every vertex $m$, there is a weight function $c: \mathcal{M} \to \R.$ 
\item Every vertex has precisely $n$ outgoing and $n$ incoming edges. For any vertex $m \in \mathcal{M}$ we form an edge from $m$ to the $n$ vertices obtained by removing the first digit of $m$ and concatenating one of the digits $0, 1, \dots, n-1.$ See Figure \ref{f1} for an example.
\item We use the following notation for this: if a vertex is obtained from vertex $m$ by appending the digit $\ell$ in its n-ary representation, the new vertex is denoted by $m|\ell.$ 
\item The edge between two vertices, where  $m_1$ is the source and  $m_2$ is the sink, is denoted by $(m_1, m_2).$ In particular, the edge between $m$ and $m|\ell$ is denoted by $(m, m|\ell).$
\item Each edge on the graph will have various weights associated to it, as described in Section \ref{game}.
\item Consider any weights assignment of vertices and edges on $G.$ We define the weight of a path to be the sum of the weights of the vertices and plus the sum of the weights of the edges along the path. We include both the start and the end vertices' weights. 
\item We define the weight of a cycle to be the sum of the weights of all vertices of the cycle plus the sum of the weights of all edges of the cycle. Every vertex and every edge of the cycle are counted exactly once.
\item For any cycle, the average weight of a cycle is the weight of the cycle divided by the number of vertices of the cycle.
\end{itemize}

\begin{figure}\label{f1}
\begin{center}
\begin{tikzpicture}[scale=1.5]
\node at (-1,0) {010};
\node at (1,0) {101};
\node at (-3,0) {000};
\node at (3,0) {111};
\node at (-2,-1) {100};
\node at (-2,1) {001};
\node at (2,-1) {110};
\node at (2,1) {011};
\draw[thick,->] (-3.2,0.15) arc (25:335:0.3);
\draw[thick,->] (3.2,-0.15) arc (-180+25:180-25:0.3);
\draw[thick,->] (-2.8,0.1)--(-2.2,0.85);
\draw[thick,<-] (-2,0.85)--(-2,-0.85);
\draw[thick,->] (-1.8,0.85)--(-1.2,0.1);
\draw[thick,<-] (-1.8,-0.85)--(-1.2,-0.1);
\draw[thick,<-] (-2.8,-0.1)--(-2.2,-0.85);
\draw[thick,<-] (2.8,0.1)--(2.2,0.85);
\draw[thick,->] (2.8,-0.1)--(2.2,-0.85);
\draw[thick,->] (2,0.85)--(2,-0.85);
\draw[thick,<-] (1.8,0.85)--(1.2,0.1);
\draw[thick,->] (1.8,-0.85)--(1.2,-0.1);
\draw[thick,->] (-1.8,1)--(1.8,1);
\draw[thick,<-] (-1.8,-1)--(1.8,-1);
\draw[thick,->] (0.85,0.15) arc (90-25:90+25:2);
\draw[thick,->] (-0.85,-0.15) arc (270-25:270+25:2);
\end{tikzpicture}
\end{center}
\caption{The de Bruijn Graph, $ n=2, d=3$}
\label{fig:graph}
\end{figure}
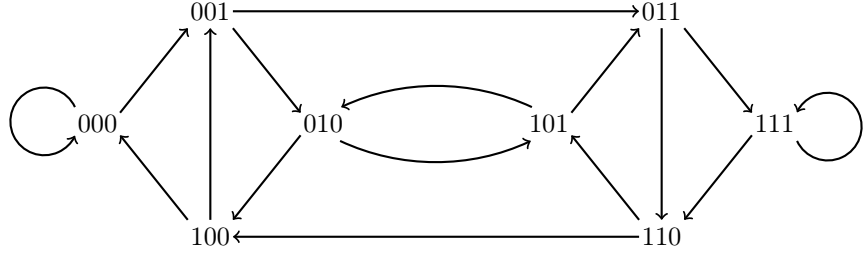

\subsection{The Game} \label{game}
In order to answer the main question in this paper (see Section \ref{main_result}), we analyze a repeated two-person zero-sum game played on the graph. In sections \ref{related} and \ref{general_graphs} we also look at other, related games. The rules of the original game are as follows.
\begin{itemize}
\item The game is played on the deBruijn graph $G$ described in Section \ref{graph}.
\item There are two players, Paul and Carol. 
\item The game is played starting at turn $0$ until the final turn $T,$ with $T\in \N,$ fixed a priori.
\item The current turn (or time) is denoted by $t \in \{ 0, 1, \dots, T \}.$


\item There is a token of the game; the game starts at vertex $m_{b}\in \mathcal{M}$  and time zero. 
\item When the turn of the game reaches time $T,$ it stops.
\item At every turn, which we parameterize by the turn number $t$ and state $m$ on the graph, Paul chooses the weights along the $n$ outgoing edges between $m$ and $m|0, m|1, \dots, m|(n-1).$  These weights are real numbers.
\item Let the weight per edge $(m,m|\ell)$ at turn $t$ be denoted by $f(t, (m,m|\ell)).$ 
\item Paul's choice at turn $t$ is subject to the restriction that the sum of all edges' weights at time $t$ originating from the current vertex $m$ has to equal zero:
\begin{equation}\label{f_sum}
\sum_{\ell=0}^{n-1}f(t, (m,m|\ell))=0.
\end{equation}
\item Having seen Paul's choice of weights, Carol picks, deterministically or probabilistically, a vertex among $m|0, m|1, \dots, m|(n-1)$ and the token of the game moves to the chosen vertex.
\item This way, the token of the game moves continuously along the path chosen by Carol. 
\item As the game progresses, Carol is allowed to choose a vertex that has already been visited. Then, Paul is allowed to choose a different weight assignment from the one(s) used in the past.
\item There is no restriction on whether weights can be repeated.
\item Paul and Carol play against each other adversarially. The cost of the total path traversed by the game token is analyzed. Paul's goal is to assign weights such that the cost of the path is as small as possible. Carol's goal is to choose vertices such that the cost of the path is as large as possible.
\item When Paul and Carol are behaving optimally, the cost of the total path traversed by the game token is determined by the length of the game $T$ and the beginning vertex $m_b$. We refer to the cost of an optimally-played, length-$T$ game, beginning at vertex $m_b$, as $U(T, m_b)$.
\item When the game is played optimally, at turn $t$ and vertex $m$, players Paul and Carol respectively minimize and maximize the cost that will be accrued over the remaining $T-t$ turns of the game, which we call $v(t, m)$. This to-be-accrued cost is equal to the cost of a game of length $T-t$ that starts at vertex $m$. Hence $U(T-t, m) = v(t, m)$.
\item We call $v$ the value function of the game.
\item If there are no turns left to play, the cost of the remaining path has to be the cost of the final vertex. In other words,
$$v(T,m)=c(m)$$
for all $m\in \mathcal{M}.$
\end{itemize}



\section{Main Result}\label{main_result}
The main question we would like to address in this paper is as follows. Given a deBruijn graph and fixed weights on its vertices, is it possible to find weights on edges so that:

1) for every vertex, the set of edges coming out of this vertex has total weight that is equal to zero.

2) the average weight of every cycle on this graph is constant. 

Answer: yes, such an assignment exists!

As a direct corollary, we see that there exist graphs of size $n^d,$ for $n\in \N, d\in \N$ such that no matter what real vertex weights are chosen, there exists an edge assignment such that the minimum mean cycle ratio and the maximum mean cycle ratio coincide!

\section{The Value Function}\label{valuefn}
Let $v(t,m_{b})$ be the value function of the game if both Paul and Carol were playing optimally against each other, starting at vertex $m_{b}$ and turn $t$ (therefore with $T-t$ steps left to go) and following the rules outlined in Subsection \ref{game}. Paul minimizes the sum of weights of any walk on the graph that starts at $m_b$ and lasts $T-t$ turns. Carol chooses the $T-t$ vertices that constitute the heaviest walk that Paul's choices of weights permit. This makes this game an adversarial repeated two-person game.
As a final time condition at the end of the game (meaning at time $T$ and an end vertex $m_e$), we set 
$$ v(T,m_e)=c(m_e).$$
The value function for any generic time $t$ with $0\leq t \leq T-1$ and generic vertex $m$ is $T-t$ successive minimax-es of the sum of the vertex and edge weights for the last $T-t$ turns of the game:
{\footnotesize
\begin{equation}\label{v_def}
v(t,m)=\min_{f(t,(m,m|\ell_t)) } \max_{\ell_t} \dots \min_{f(T-1,(m_{T-2}, m_{T-1})) 
}\max_{\ell_{T-1}}\Big\{\sum_{s=t}^T c(m_s)+\sum_{s=t}^{T-1} f(s,(m_{s-1},m_s)) \Big\},
\end{equation}
}
where $m_0=m$ and $m_s=m|\ell_t|\ell_{t+1}|\dots|\ell_s.$ The minima are subject to constraints, namely \eqref{f_sum}, that is: the first minimum is subject to $\sum f(t,(m,m|\ell_t))=0,$ whereas the last minimum is subject to $\sum f(T-1,(m_{T-2}, m_{T-1}))=0.$


Since we only consider optimal strategies, the value $v$ of the game is uniquely determined by the optimal choices of Paul and Carol. One can see this through a recursive interpretation of the game. Suppose the game is at state $m$ and turn $t$ with $t<T.$ Suppose Paul knew the values of the game for $t+1$ and for all $\tilde{m}$ that are adjacent to $m.$ Then Paul considers the optimal costs at turn $t+1,$ namely $v(t+1,\tilde{m}),$ and builds the value function $v(t,m)$ using these optimal functions. 
Thus we can think of $v(t,m)$ being constructed through an optimal action at $(t+1, \tilde{m}).$
In other words, we have a recursive definition of the value function $v(t,m)$ at time $t$ through a minmax of the value functions $v(t+1, m|\ell)$ at time $t+1$ for all possible successors $m|\ell$ of $m$. Therefore the minmax is global. This becomes more transparent in formula \eqref{DPP_formula}, the Dynamic Programming Principle.

We recall that for every $t$ and every generic vertex $\bar{m}\in\mathcal{M}$ we have formula \eqref{f_sum}: 
$$\sum_{\ell=0}^{n-1}f(t, (\bar{m},\bar{m}|\ell))=0.$$ 
So, the choice that Carol makes at turn $t$ is to pick a vertex $m|\ell$ as to maximize the cost of the current vertex $c(m)$ plus the weight per given edge $(m, m|\ell),$ plus the remaining weight of the path. Therefore, Carol's choice is between options 
$$c(m)+f(t, (m, m|\ell)) + v(t+1, m|\ell), ~~~\text{for}~~~\ell=0, \dots, n-1.$$


In what follows we frequently use the Dynamic Programming Principle (see  \cite{book_Bertsekas} or \cite{book_Cormen}).
Because of the iterative nature of problem \eqref{v_def}, and because the decisions of Paul and Carol alternate, we obtain the following Dynamic Programming Principle formulation
\begin{equation}\label{DPP_formula}
v(t,m)=\min_{f: ~~~~\sum_{\ell=0}^{n-1}f(t, (m,m|\ell))=0} \max_{\ell} \big\{c(m)+ f(t, (m,m|\ell))+ v(t+1, m|\ell) \big\} ~~~\text{if}~~~ t<T,
\end{equation}
with the final time condition
$$v(T,m_e)=c(m_e).$$


Note that, in particular, 
\begin{equation}
v(0,m_b)=\min_{f: ~~~~\sum_{\ell=0}^{n-1}f(t, (m_b,m_b|\ell))=0} \max_{\ell} \big\{c(m_b)+ f(0, (m_b,m_b|\ell))+ v(1, m_b|\ell) \big\} ~~~\text{if}~~~ t<T.
\end{equation}
In \eqref{DPP_formula}, Paul chooses $f(t, (m, m|\cdot))$ on every step $t$, whereas Carol chooses the next vertex, probabilistically or deterministically, along the path by picking $\ell$ to arrive at vertex $m|\ell.$

The final value, for any vertex $m$, is $v(T,m)=c(m).$ By going backwards in time, we define, using the Dynamic Programming Principle, $v(T-1, \bar{m}),\dots, v(0,m_b)$  for any $\bar{m}, m_b.$ The value of the game is defined recursively, because the optimal behaviour of Paul and Carol relates what happens at time $t$ to what would happen at a future turn $t+1$ provided optimal choices. 

\begin{lemma}\label{value}
For every $t\in \mathbb{N}_0$ with $t<T$, the value function fulfills the following:
$$v(t,m)=c(m) + \frac{1}{n}\sum_{\ell=0}^{n-1}v(t+1, m|\ell).$$
\end{lemma}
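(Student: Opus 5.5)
The plan is to evaluate the minimax in the Dynamic Programming Principle \eqref{DPP_formula} at a single step. We fix $t<T$ and $m$, and treat the values $a_\ell := v(t+1,m|\ell)$, $\ell=0,\dots,n-1$, as known real numbers. Since $c(m)$ does not depend on Paul's or Carol's choice, it factors out of the min--max. The statement then reduces to the elementary claim
\[
\min_{f_0+\dots+f_{n-1}=0}\ \max_{\ell}\,\{f_\ell + a_\ell\} \;=\; \frac{1}{n}\sum_{\ell=0}^{n-1} a_\ell ,
\]
where $f_\ell = f(t,(m,m|\ell))$. No induction in $t$ is needed, because the DPP already links $v(t,\cdot)$ to $v(t+1,\cdot)$ and the claim is pointwise in $(t,m)$.

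\textbf{Lower bound.} For any admissible $f$ (that is, $\sum_\ell f_\ell=0$), the maximum of the $n$ numbers $f_\ell+a_\ell$ is at least their arithmetic mean:
\[
\max_\ell \{f_\ell+a_\ell\} \;\ge\; \frac1n\sum_\ell (f_\ell+a_\ell) \;=\; \frac1n\sum_\ell a_\ell,
\]
using constraint \eqref{f_sum}. So Paul can never force the value below the average $\bar a := \frac1n\sum_\ell a_\ell$. Equivalently, Carol guarantees at least $\bar a$ by choosing $\ell$ uniformly at random.

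\textbf{Upper bound.} Paul chooses $f_\ell := \bar a - a_\ell$. This satisfies \eqref{f_sum}, since $\sum_\ell(\bar a-a_\ell)=n\bar a-\sum_\ell a_\ell=0$. With this choice every option equals $\bar a$, so Carol's maximum is exactly $\bar a$. Hence the minimum is attained and equals $\bar a$, and adding back $c(m)$ gives the formula in Lemma \ref{value}.

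The only point needing care is whether the minimum over the unbounded set $\{\sum_\ell f_\ell=0\}$ is actually attained rather than just an infimum; the explicit equalizing choice settles this. If Carol is allowed to randomize, the maximum over mixed choices equals the maximum over pure choices, since the expectation of a linear payoff is maximized at a vertex, so the value is unchanged. As a byproduct we record Paul's optimal strategy $f(t,(m,m|\ell)) = \frac1n\sum_k v(t+1,m|k) - v(t+1,m|\ell)$, which is presumably what will later be used to build the edge weights.
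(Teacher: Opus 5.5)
Your proposal is correct and follows the paper's route. Both apply the Dynamic Programming Principle \eqref{DPP_formula} at a single step and let Paul equalize all of Carol's options via $f(t,(m,m|\ell))=\frac{1}{n}\sum_{k}v(t+1,m|k)-v(t+1,m|\ell)$, which is exactly \eqref{star}. Your averaging lower bound, $\max_\ell\{f_\ell+a_\ell\}\ge\frac{1}{n}\sum_\ell a_\ell$ for every admissible $f$, also supplies the justification for the optimality of the equalizing choice, which the paper only asserts with ``because these are linear expressions.''
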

\begin{proof}
Observe that, for fixed $t,m$ one optimal choice for Carol is to pick the largest value among 
$c(m)+ f(t, (m,m|\ell))+ v(t+1, m|\ell)$ and move the token of the game to $m|\ell$. 

Knowing this strategy, Paul has to minimize the largest such value (subject to \eqref{f_sum}). The minimum occurs when these values are equal, because these are linear expressions.
Therefore, Paul sets 
\begin{equation}\label{v_sum}
c(m)+ f(t, (m,m|\ell))+ v(t+1, m|\ell)=c(m)+ f(t, (m,m|\hat{\ell}))+ v(t+1, m|\hat{\ell})
\end{equation}
for all $\ell, \hat{\ell} \in \{0, 1,\dots, n-1 \}.$
Together with equation \eqref{f_sum}, these are ${ n\choose 2} +1$ linear equations in the $n$ variables $f(t, (m,m|0)),  \dots, f(t,(m,m|n-1)),$ with the unique solution being 
\begin{equation}\label{star}
f(t,(m,m|\hat{\ell})) = \frac{1}{n}\sum_{\ell=0}^{n-1}v(t+1, m|\ell) - v(t+1, m|\hat{\ell}). 
\end{equation}
Upon plugging in \eqref{star} into \eqref{DPP_formula}, we obtain the result.
\end{proof}
\begin{lemma} \label{probab}
Carol's choice might as well be probabilistic, because, if so, the value function won't change.
\end{lemma}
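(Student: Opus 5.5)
We prove the lemma by backward induction on $t$, replacing the deterministic maximum in \eqref{DPP_formula} by an expectation over a mixed strategy of Carol and showing that the resulting recursion has the same solution as in Lemma \ref{value}. Let $p=(p_0,\dots,p_{n-1})$ be a probability vector on $\{0,\dots,n-1\}$, chosen by Carol after seeing Paul's weights. Define $\tilde v(T,m)=c(m)$ and, for $t<T$,
$$\tilde v(t,m)=\min_{f:\ \sum_\ell f(t,(m,m|\ell))=0}\ \max_{p}\ \sum_{\ell=0}^{n-1}p_\ell\big\{c(m)+f(t,(m,m|\ell))+\tilde v(t+1,m|\ell)\big\}.$$
The claim is that $\tilde v=v$.

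The key step is the standard observation that a linear function of $p$ on the simplex attains its maximum at a vertex. Hence, for every fixed $f$,
$$\max_p \sum_\ell p_\ell\, a_\ell=\max_\ell a_\ell,\qquad a_\ell:=c(m)+f(t,(m,m|\ell))+\tilde v(t+1,m|\ell).$$
Assume inductively that $\tilde v(t+1,\cdot)=v(t+1,\cdot)$; the base case $t+1=T$ holds since both equal $c$. Then the inner maximization over $p$ coincides with the inner maximization over $\ell$ in \eqref{DPP_formula}. Minimizing over the same constraint set therefore gives $\tilde v(t,m)=v(t,m)$, and by Lemma \ref{value} this equals $c(m)+\frac1n\sum_\ell v(t+1,m|\ell)$.

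It is also worth recording the direct computation, which explains why randomness is harmless here. Under Paul's optimal choice \eqref{star}, all the $a_\ell$ are equal, to $c(m)+\frac1n\sum_\ell v(t+1,m|\ell)$. Every $p$, in particular the uniform one, therefore yields exactly $v(t,m)$, so Carol is indifferent among all her mixed strategies.

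The main obstacle is interpretive: fixing what ``the value'' means when Carol randomizes over the remaining $T-t$ turns. The natural meaning is the expected accumulated cost, with Paul minimizing it and Carol maximizing it. One must check that the dynamic programming principle still holds for expectations. This follows from linearity of expectation and the tower property: conditioning on the first move reduces the expected remaining cost to the expected value of the continuation game, which by induction equals $v(t+1,m|\ell)$. With that in place, the vertex argument above completes the induction.
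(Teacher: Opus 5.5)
Your proof is correct, but its main step differs from the paper's. The paper argues at a single turn and only under Paul's equalizing strategy \eqref{star}. There all the options $c(m)+f(t,(m,m|\ell))+v(t+1,m|\ell)$ are equal, so any mixture $\alpha_\ell(t)$ returns exactly $v(t,m)$ and Carol is indifferent; this is precisely your ``direct computation'' paragraph. Your primary argument instead uses that a linear function on the simplex is maximized at a vertex. Hence $\max_p\sum_\ell p_\ell a_\ell=\max_\ell a_\ell$ for \emph{every} admissible $f$, not only the optimal one. The randomized and deterministic recursions are then literally identical, and backward induction, with the tower property handling the continuation, closes the argument.

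This gives you a two-sided identification of the values that never needs Paul's optimal strategy. It also treats the multi-turn structure explicitly, which the paper takes for granted by plugging in $v(t+1,\cdot)$ as the continuation value. Strictly speaking, the paper's computation only shows that Paul's equalizing strategy caps Carol's expected payoff at $v(t,m)$. The reverse inequality is immediate but not written: pure strategies are special mixed ones, or the uniform mixture yields $v(t,m)$ against any admissible $f$ because $\sum_\ell f(t,(m,m|\ell))=0$. In exchange, the paper's route gives the stronger fact of indifference under optimal play. That fact is what later justifies modelling Carol's moves as a uniform random walk in Section \ref{general_graphs}.

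One minor point: the paper later uses $\tilde v$ for the role-reversed game, so a different symbol for your randomized value function would avoid a clash.
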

\begin{proof}
Suppose at time $t$ and vertex $m,$ Carol chooses vertex $m|\ell$ with probability $\alpha_\ell(t).$ Then, the expected weight of the path chosen by her strategy would be
\begin{equation}\label{v_alpha}
  \sum_{\ell=0}^{n-1} \alpha_\ell(t) \Big(c(m)+ f(t, (m,m|\ell))+ v(t+1, m|\ell)\Big).
\end{equation}
Here $\alpha_\ell(t)$ are nonnegative numbers such that $\sum_{\ell=0}^{n-1}\alpha_\ell(t)=1.$

Upon analyzing at the Dynamic Programming Principle and using formulas  \eqref{v_sum} and \eqref{star}, we see that
\begin{eqnarray}\label{v_alpha}
 && \sum_{\ell=0}^{n-1} \alpha_\ell(t) \Big(c(m)+ f(t, (m,m|\ell))+ v(t+1, m|\ell)\Big) = \\
  &&=c(m)+  \sum_{\ell=0}^{n-1} \alpha_\ell(t) \Big( f(t, (m,m|\ell))+ v(t+1, m|\ell)\Big)=
  v(t,m),
\end{eqnarray}
so essentially Carol's choice can be probabilistic, too, and this won't change the value function $v(t,m)$ or the solution $v(0,m_b)$ to the problem.
\end{proof}

\begin{theorem}
The explicit formula of the value function is:
$$v(t,m)=c(m)+\frac{1}{n}\sum_{\ell_1=0}^{n-1}c(m|\ell_1)+ \dots +\frac{1}{n^{T-t}}\sum c(m|\ell_1|\ell_2,...|\ell_{T-t}). $$
The second sum is over all $\ell_i\in\{0, 1, \dots, n-1 \}$ for $i=1, 2, \dots, T-t.$
Observe that when $T-t>d,$ we obtain
{
\begin{equation} \label{star_new}
v(t,m)=c(m)+\frac{1}{n}\sum_{\ell_1=0}^{n-1}c(m|\ell_1)+ \dots +\frac{1}{n^{d-1}}\sum c(m|\ell_1|\ell_2|\dots|\ell_d) + \frac{(T-t-d+1)}{n^d}\sum_{\bar{m}\in\mathcal{M}} c(\bar{m}).
\end{equation}
The second sum is over all $\ell_i\in\{0, 1, \dots, n-1 \}$ for $i=1, 2, \dots, d.$
}
\end{theorem}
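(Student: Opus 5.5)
The first formula follows by backward induction on $k=T-t$, using Lemma \ref{value} as the one-step recursion. For $k=0$ the claim is $v(T,m)=c(m)$, which is the final time condition. Suppose the formula holds at time $t+1$ for every vertex, so that
$$v(t+1,\tilde m)=\sum_{j=0}^{T-t-1}\frac{1}{n^j}\sum_{\ell_1,\dots,\ell_j} c(\tilde m|\ell_1|\dots|\ell_j).$$
Substitute $\tilde m=m|\ell$ into Lemma \ref{value} and average over $\ell$. This gives
$$v(t,m)=c(m)+\sum_{j=0}^{T-t-1}\frac{1}{n^{j+1}}\sum_{\ell,\ell_1,\dots,\ell_j} c(m|\ell|\ell_1|\dots|\ell_j).$$
After relabeling $\ell\to\ell_1$ and $\ell_i\to\ell_{i+1}$, this is the claimed formula with $T-t$ levels. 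The only point to check is that the iterated concatenation $m|\ell_1|\dots|\ell_j$ is well defined for every $j$, including $j>d$. It is, since each step drops the first digit and appends a new one.

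For the second formula, the key observation concerns the vertex $m|\ell_1|\dots|\ell_j$ when $j\ge d$. This vertex is exactly the word $\ell_{j-d+1}\ell_{j-d+2}\cdots\ell_j$, because after $d$ appends every original digit of $m$ has been shifted out. Hence, as $(\ell_1,\dots,\ell_j)$ ranges over $\{0,\dots,n-1\}^j$, the vertex $m|\ell_1|\dots|\ell_j$ does not depend on $m$ or on $\ell_1,\dots,\ell_{j-d}$. Moreover, the map $(\ell_{j-d+1},\dots,\ell_j)\mapsto \bar m$ is a bijection onto $\mathcal M$. Therefore
$$\frac{1}{n^j}\sum_{\ell_1,\dots,\ell_j} c(m|\ell_1|\dots|\ell_j)=\frac{n^{j-d}}{n^j}\sum_{\bar m\in\mathcal M}c(\bar m)=\frac{1}{n^d}\sum_{\bar m\in\mathcal M}c(\bar m),$$
independent of $j$ for all $j\ge d$.

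Summing these identical terms for $j=d,d+1,\dots,T-t$ gives $(T-t-d+1)$ copies of $\frac1{n^d}\sum_{\bar m}c(\bar m)$. The terms with $j\le d-1$ are kept as written. This yields \eqref{star_new}. Note that the $j=d$ term also equals this average, which explains the factor $T-t-d+1$ rather than $T-t-d$. The displayed \eqref{star_new} nonetheless lists a $\frac1{n^{d-1}}\sum c(m|\ell_1|\dots|\ell_d)$ term separately. I would read that term as a typo for the $j=d-1$ term, $\frac1{n^{d-1}}\sum c(m|\ell_1|\dots|\ell_{d-1})$, so that the sum runs over $j=0,\dots,d-1$ plus $(T-t-d+1)$ copies of the average. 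I would state this indexing explicitly in the proof.

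The main obstacle is keeping this index bookkeeping consistent with the statement. The bijection argument itself is routine.
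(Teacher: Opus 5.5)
Your proof is correct and takes the same route as the paper, whose proof is simply ``by induction''. Your backward induction on $T-t$ via Lemma~\ref{value}, plus the observation that $m|\ell_1|\dots|\ell_j=\ell_{j-d+1}\cdots\ell_j$ for $j\ge d$ (so each such level averages to $\frac{1}{n^d}\sum_{\bar{m}\in\mathcal{M}}c(\bar{m})$), supplies exactly the details that proof omits. Your reading of the display is also right: with the coefficient $(T-t-d+1)$ the last explicitly listed term must be the level $j=d-1$, i.e.\ $\frac{1}{n^{d-1}}\sum c(m|\ell_1|\dots|\ell_{d-1})$, because the term as printed (with $d$ indices) is inconsistent with that coefficient.
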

\begin{proof}
The proof is by induction. 
%
\end{proof}

\begin{lemma}\label{lemma2}
The optimal weights that Paul picks at any step earlier than $T-d$ are independent of time.
\end{lemma}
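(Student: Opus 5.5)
The plan is to combine the formula \eqref{star} for Paul's optimal weights from Lemma \ref{value} with the explicit formula \eqref{star_new} for the value function. By \eqref{star}, Paul's optimal weight at time $t$ on the edge $(m,m|\hat{\ell})$ is
$$f(t,(m,m|\hat{\ell})) = \frac{1}{n}\sum_{\ell=0}^{n-1}v(t+1, m|\ell) - v(t+1, m|\hat{\ell}).$$
So it depends on time only through the differences $v(t+1,\bar m)-v(t+1,\tilde m)$ between values at the same time $t+1$. It therefore suffices to show that, for $t+1$ in the range where \eqref{star_new} applies, $v(t+1,\bar m)$ equals a function of $\bar m$ alone plus a term depending only on $t$. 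That time-dependent term then cancels in the difference above.

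First, I would fix $t$ with $t<T-d$, so that $T-(t+1)\geq d$. Second, I would apply \eqref{star_new} at time $t+1$. If the case $T-t-1=d$ is not literally covered by the strict inequality in the Theorem, I would instead use the general explicit formula there: it gives terms $c(\bar m),\dots,\frac{1}{n^{d-1}}\sum c(\bar m|\ell_1|\dots|\ell_{d-1})$, and the last term, which has $d$ appended digits, is $\frac{1}{n^d}\sum_{\bar{m}\in\mathcal{M}}c(\bar m)$. This gives
$$v(t+1,\bar m)=g(\bar m)+\frac{(T-t-d)}{n^d}\sum_{m'\in\mathcal{M}} c(m'),\qquad g(\bar m):=\sum_{k=0}^{d-1}\frac{1}{n^k}\sum_{\ell_1,\dots,\ell_k} c(\bar m|\ell_1|\dots|\ell_k).$$

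The key observation behind this step is that appending $d$ digits to any vertex $\bar m$ shifts out all of $\bar m$'s original digits. Hence $\bar m|\ell_1|\dots|\ell_j$ for $j\geq d$ ranges uniformly over all of $\mathcal{M}$, each vertex counted $n^{j-d}$ times. Every such level therefore contributes exactly $\frac{1}{n^d}\sum_{\mathcal M} c$, a constant independent of $\bar m$. This is exactly where the hypothesis "earlier than $T-d$" is used.

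Third, I would substitute this into \eqref{star}. The constant $\frac{(T-t-d)}{n^d}\sum c$ appears once in the average $\frac1n\sum_\ell v(t+1,m|\ell)$ and once in $v(t+1,m|\hat\ell)$, so it cancels. What remains is
$$f(t,(m,m|\hat\ell))=\frac1n\sum_{\ell}g(m|\ell)-g(m|\hat\ell),$$
which has no dependence on $t$. This proves the lemma and also gives an explicit time-independent formula for the edge weights, to be used later for Theorem \ref{main}.

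The main obstacle is bookkeeping at the boundary index. I need to check that $t<T-d$ is exactly the condition making $T-(t+1)\geq d$, and that the count of constant terms is handled consistently whether or not the inequality in \eqref{star_new} is strict. Since that count only affects the cancelling constant, an off-by-one there is harmless. The substantive content is the uniformity of $d$-step extensions on the deBruijn graph.
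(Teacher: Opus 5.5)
Your proposal is correct and follows essentially the paper's route: the paper also starts from \eqref{star} and uses a chain of equalities, justified by averaging over the deBruijn tree rooted at $m$, to show that $\frac{1}{n}\sum_{\hat\ell}v(t+1,m|\hat\ell)-v(t+1,m|\ell)$ does not change with $t$. Your decomposition $v(t+1,\bar m)=g(\bar m)+\frac{T-t-d}{n^d}\sum_{m'\in\mathcal{M}}c(m')$ makes that justification explicit, since it amounts to $v(t,\cdot)-v(t+1,\cdot)$ being vertex-independent once $d$ digits have been appended, and your separate treatment of the boundary case $T-t-1=d$ is sound.
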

\begin{proof}
Consider any vertex $m$ and its successor $m|\ell$. Then, the optimal weight picked by Paul is, using equation \eqref{star}, 
\begin{align*}
f(t, (m,m|\ell)) &= \frac{1}{n}\sum_{\hat{\ell}=0}^{n-1}v(t+1, m|\hat{\ell}) - v(t+1, m|\ell) = \frac{1}{n}\sum_{\hat{\ell}=0}^{n-1}v(t, m|\hat{\ell}) - v(t, m|\ell)= \dots \\
 &=\frac{1}{n}\sum_{\hat{\ell}=0}^{n-1}v(s+1, m|\hat{\ell}) - v(s+1, m|\ell) = f(s,(m,m|\ell)),
\end{align*}
where $s<t$ with $s\in\N_0.$
We observe that the above equalities hold for $s, t\leq T-d,$ because they consider the weighted average of what vertices can be reached by the deBruijn tree rooted at $m.$
\end{proof}
Since $f(t,(m,m|\ell))$ does not depend on t for $t<T-d,$ we denote $f(m, m|\ell)=f(t, (m,m|\ell))$ to be this common value.

\begin{corollary}\label{corollary2} 
Consider a cycle, whose traversal ends earlier than time $T-d.$ Then, the average of the sum of weights it takes this traversal is independent of time.
\end{corollary}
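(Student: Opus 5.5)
The plan is to read the corollary as a direct consequence of Lemma \ref{lemma2}. Take a cycle $m_0 \to m_1 \to \dots \to m_{k-1} \to m_k = m_0$ of length $k$. Suppose its traversal starts at some time $t_0$ and ends at time $t_0+k$, with $t_0+k < T-d$. The edge $(m_{j},m_{j+1})$ is traversed at time $t_0+j$, so its weight is $f(t_0+j,(m_j,m_{j+1}))$.

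Every such time satisfies $t_0+j < T-d$. Lemma \ref{lemma2} therefore gives
\[
f(t_0+j,(m_j,m_{j+1})) = f(m_j,m_{j+1}),
\]
which is the common time-independent value. The vertex weights $c(m_j)$ do not depend on time by definition.

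Hence the cycle weight in the sense of Section \ref{graph}, counting each vertex and edge once, is
\[
\sum_{j=0}^{k-1}\bigl(c(m_j)+f(m_j,m_{j+1})\bigr).
\]
This expression does not involve $t_0$. Dividing by $k$ gives an average that is also independent of $t_0$. The same argument covers a traversal that starts at any vertex of the cycle: a cyclic shift only permutes the terms of the sum.

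The main point to check is the range of times allowed by Lemma \ref{lemma2}. That lemma is stated for steps earlier than $T-d$, and its proof writes the condition as $s,t\le T-d$. I would confirm that the last edge of the cycle, traversed at time $t_0+k-1$, lies in the admissible range.

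To do this, I would verify from formula \eqref{star_new} that for $t+1 \le T-d$ the difference $v(t+1,m|\hat\ell)-v(t+1,m|\ell)$ does not depend on $t$. The term $\frac{T-t-d}{n^d}\sum c$ is common to all successors and cancels in \eqref{star}. This requires $T-(t+1)\ge d$, i.e.\ $t\le T-d-1$, which the hypothesis on the traversal's end time guarantees.

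Once this is settled, the corollary follows by summing the time-independent weights around the cycle.
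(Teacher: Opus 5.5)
Your proposal is correct and follows essentially the same route as the paper. Both arguments invoke Lemma~\ref{lemma2}, use that vertex weights are time-independent, and note that every edge of the cycle is traversed before $T-d$, so $\sum_{j}\bigl(c(m_j)+f(m_j,m_{j+1})\bigr)$ does not depend on the start time. Your additional check via \eqref{star_new}, that the term $\frac{T-t-d}{n^d}\sum_{\bar m} c(\bar m)$ cancels in \eqref{star} whenever $t\le T-d-1$, makes explicit the admissible time range of Lemma~\ref{lemma2}, which the paper leaves somewhat implicit.
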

\begin{proof}
The statement of the corollary follows from Lemma \ref{lemma2}, using that the weight on each vertex is constant, and the weights on edges are constant for edges that start before time $T-d.$ Since the cycle ends at time before $T-d,$ all edges encountered during a walk that traverses the cycle occur before time $T-d.$ 
\end{proof}

\begin{theorem}\label{main}
For cycles that end before time $T-d,$ every cycle in the graph has the same average weight. This weight equals the arithmetic mean of the weight on every vertex, namely
$$ \frac{1}{n^d}\sum_{\bar{m}\in\mathcal{M}}c(\bar{m}).$$
\end{theorem}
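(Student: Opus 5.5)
Our plan is to show that the edge weights $f(m,m|\ell)$ from Lemma \ref{lemma2} make the weight of every cycle equal to its length times $\frac{1}{n^d}\sum_{\bar m}c(\bar m)$. We use a telescoping argument driven by Lemma \ref{value}, not a direct use of the explicit formula. Write $\mu:=\frac{1}{n^d}\sum_{\bar{m}\in\mathcal{M}}c(\bar m)$.

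First, fix a time $t$ with $t+1\le T-d$ (so $T-t-1\ge d$, as required in the earlier theorem). By \eqref{star}, the optimal weight is $f(m,m|\ell)=\frac1n\sum_{\hat\ell}v(t+1,m|\hat\ell)-v(t+1,m|\ell)$. Lemma \ref{value} gives $\frac1n\sum_{\hat\ell}v(t+1,m|\hat\ell)=v(t,m)-c(m)$. Hence
\begin{equation*}
c(m)+f(m,m|\ell)=v(t,m)-v(t+1,m|\ell).
\end{equation*}
Next we remove the time dependence. By formula \eqref{star_new}, for $T-t>d$ we have $v(t,m)=h(m)+(T-t-d+1)\mu$, where $h(m)$ collects the first $d$ terms and does not depend on $t$. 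Therefore
\begin{equation*}
v(t,m)-v(t+1,\bar m)=h(m)-h(\bar m)+\mu
\end{equation*}
for every edge $(m,\bar m)$. This shows $c(m)+f(m,\bar m)=h(m)-h(\bar m)+\mu$, which is a potential difference plus the constant $\mu$.

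Now take any cycle $m_0\to m_1\to\dots\to m_k=m_0$ with $k$ distinct vertices. Its weight counts each vertex once and each edge once, so it equals $\sum_{i=0}^{k-1}\big(c(m_i)+f(m_i,m_{i+1})\big)$. Substituting the previous identity, the $h$-terms telescope to zero because $m_k=m_0$, and the weight is $k\mu$. Dividing by the number of vertices $k$ gives the average $\mu$, as claimed. Corollary \ref{corollary2} guarantees that the same edge weights are used throughout a traversal ending before $T-d$, so the time-independent $f$ is legitimate along the cycle.

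The main obstacle is the bookkeeping of the time range. Formula \eqref{star_new} must be applied both at time $t$ and at time $t+1$, which needs $T-t-1\ge d$; one must check that the exponent/coefficient $(T-t-d+1)$ is correct at the boundary, including the $\ell$-sums up to length $d$. If \eqref{star_new} is off by one at the boundary, we would instead verify $v(t,m)-v(t+1,m)=\mu$ directly from Lemma \ref{value} by induction. We would also check that the $d$-th level sum $\frac{1}{n^d}\sum c(m|\ell_1|\dots|\ell_d)$ equals $\mu$, since $m|\ell_1|\dots|\ell_d$ ranges over all of $\mathcal M$ exactly once, and that it is consistent with the term $h$.
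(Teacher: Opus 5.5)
Your proof is correct and is essentially the paper's argument. The paper combines \eqref{star} with Lemma \ref{value} to say that a timed traversal of a length-$k$ cycle costs $v(T-s-k,m)-v(T-s,m)$, and then evaluates this as $k\mu$ via \eqref{star_new}. You instead split $v(t,m)=h(m)+(T-t-d+1)\mu$ first and telescope the time-independent potential $h$ around the cycle; this also makes explicit the identity $c(m)+f(m,m|\ell)=v(t,m)-v(t+1,m|\ell)$, which the paper leaves implicit.

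Your caution about the boundary is warranted: the displayed \eqref{star_new} has an off-by-one inconsistency (prefactor $n^{-(d-1)}$ on a sum with $d$ appended digits). As you note, such an error only shifts $m$-independent constants and not the slope $-\mu$ in $t$, so your argument is unaffected.
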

\begin{proof}
We consider a cycle starting at vertex $m$ of length $k.$ By Corollary \ref{corollary2}, the weight to traverse such a cycle is independent of time, provided the edges are traversed early (before time $T-d$). 
We observe that from \eqref{star} and for $s\geq d$ the weight of the cycle is precisely 
$$v(T-s-k, m)-v(T-s, m)=k \frac{1}{n^d}\sum_{\bar{m}\in \mathcal{M}}c(\bar{m}).$$
Therefore, the average weight of the cycle is $ \frac{1}{n^d}\sum_{\bar{m}\in \mathcal{M}} c(\bar{m}),$ which concludes the proof. 
\end{proof}
This resolves the main question of the paper. We see that there exists an assignment of weights on edges, coming from formulas \eqref{star} and \eqref{star_new}, that results in every cycle on the deBruijn graph having the same average weight $ \frac{1}{n^d}\sum_{\bar{m}\in\mathcal{M}}c(\bar{m}).$

Finally, we also have that the value function $v$ solves a discrete Poisson equation on the graph. Note that a similar result is obtained for $n=2$ and for a time-independent problem in \cite{calder2021asymp}. 
Let $\Delta$ be the discrete Laplacian on the deBruijn graph, meaning for any function $h$ defined on the graph,
$$\Delta h(t,m) := h(m) - \sum_{\ell=0}^{n-1}h(t,m|\ell).$$ 
We state the following result.
\begin{theorem}
For $t<T-d$ the value function $v,$ whose formula is \eqref{star},  solves the discrete Poisson equation 
\begin{equation}\label{delta}
\Delta v(t,m) = c(m) - \frac{1}{n^d}\sum_{\bar{m}\in \mathcal{M}}c(\bar{m}).
\end{equation}
\end{theorem}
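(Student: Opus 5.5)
The plan is to combine Lemma \ref{value}, which relates $v$ at consecutive times, with the explicit formula \eqref{star_new}, which shows that away from the terminal time $v$ changes in time only by a constant shift. I read the discrete Laplacian as the averaged one, $\Delta h(t,m) = h(t,m) - \frac{1}{n}\sum_{\ell=0}^{n-1} h(t,m|\ell)$. This is the normalization that matches Lemma \ref{value} and the factor $\frac1n$ in \eqref{star}. Without the $\frac1n$ the identity cannot hold (already for constant $c$), so I would state this normalization explicitly at the start of the proof.

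\textbf{Step 1 (time shift is constant).} Put $\mu := \frac{1}{n^d}\sum_{\bar m\in\mathcal M} c(\bar m)$. For any $s$ with $T-s\ge d$, write out $v(s,m)$ using \eqref{star_new}. At $T-s=d$ the general explicit formula gives the same expression, with the last coefficient equal to $1=(d-d+1)$. The first $d$ terms do not depend on $s$; only the coefficient $(T-s-d+1)$ of $\sum_{\bar m}c(\bar m)$ does. Subtracting the formulas for $s=t$ and $s=t+1$ therefore gives
\begin{equation*}
v(t+1,\bar m) - v(t,\bar m) = -\mu \qquad \text{for all } \bar m\in\mathcal M,
\end{equation*}
whenever $T-t-1\ge d$, which is exactly the regime $t<T-d$.

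\textbf{Step 2 (apply the one-step relation).} By Lemma \ref{value}, $v(t,m) = c(m) + \frac1n\sum_{\ell} v(t+1,m|\ell)$. Substituting $v(t+1,m|\ell) = v(t,m|\ell) - \mu$ from Step 1 gives
\begin{equation*}
v(t,m) - \frac1n\sum_{\ell=0}^{n-1} v(t,m|\ell) = c(m) - \mu .
\end{equation*}
This is precisely \eqref{delta}.

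The only delicate point, and the one I expect to be the main obstacle, is the index bookkeeping in Step 1. One must check that \eqref{star_new} is valid at both times $t$ and $t+1$, including the boundary case $T-t-1=d$. This is needed so that the full range $t<T-d$ is covered, and it explains why the hypothesis is $t<T-d$ rather than $t<T$. I would handle it by rewriting the general explicit formula at $T-t-1=d$ term by term: its last sum runs over all words of length $d$, which is all of $\mathcal M$ with weight $n^{-d}$. Finally, I would note that Lemma \ref{lemma2} gives the same conclusion from another direction, since $f$ is time-independent in this range. Step 1 is the concrete route I would follow, because it avoids appealing to that lemma's somewhat informal justification.
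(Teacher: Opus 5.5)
Your proof is correct, but it is organized differently from the paper's. The paper's proof is the one-line instruction to substitute the closed form into \eqref{delta}. Averaging the closed form for $v(t,\cdot)$ over the $n$ successors $m|\ell$ shifts each level-$k$ sum to level $k+1$, and the multiples of $\mu$ cancel. So everything telescopes except the level-$0$ term $c(m)$ and the level-$d$ term $\frac{1}{n^d}\sum_{\ell_1,\dots,\ell_d}c(m|\ell_1|\dots|\ell_d)=\mu$.

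You instead route the computation through Lemma \ref{value}, which already encodes this one-step averaging, and take from the closed form only its dependence on $t$. This makes your argument robust: you never use the exact spatial terms of \eqref{star_new}. As printed, that formula has an index slip; its last structured term should be $\frac{1}{n^{d-1}}\sum c(m|\ell_1|\dots|\ell_{d-1})$. Your route also shows the Poisson equation as just the dynamic programming recursion plus the fact that $v$ shifts by $\mu$ per unit time. The direct substitution, in exchange, does not need Lemma \ref{value} at all.

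The boundary bookkeeping you flag as the main obstacle can be skipped. By the general explicit formula, $v(t,m)-v(t+1,m)$ is exactly the level-$(T-t)$ term $\frac{1}{n^{T-t}}\sum c(m|\ell_1|\dots|\ell_{T-t})$. That term equals $\mu$ as soon as $T-t\ge d$, so your argument even gives the identity at $t=T-d$.

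Your normalization remark is not cosmetic. With $\Delta$ as printed in the paper (no factor $\frac1n$, and $h(m)$ in place of $h(t,m)$), the statement is false, as your constant-$c$ example shows. Likewise, the statement's reference to \eqref{star}, which is the formula for $f$, must be read as the closed form \eqref{star_new}.
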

\begin{proof}
The proof follows from plugging in \eqref{star} in \eqref{delta}.
\end{proof}

\section{Related Games}\label{related}
In this section we present two games, which are related to the main game described in Sections \ref{notation} and \ref{valuefn}. First, in a remark we consider the simpler game. The rest of the subsection is devoted to the second game, which is a generalization of the original game, and of the one in the remark below.
\begin{remark}
Absolutely analogous results to the ones presented in \ref{game} are obtained when playing a game with the objective of Paul to maximize, and Carol to minimize, but otherwise with the same rules. Then the value function can be written as 
$$\tilde{v}(t,m)=\max_{f(t,(m,m|\ell))} \min_{\ell} \big\{c(m)+ f(t, (m,m|\ell))+ \tilde{v}(t+1, m|\ell) \big\} ~~~\text{if}~~~ t<T$$
$$\tilde{v}(T,m_e)=c(m_e),$$
with the maximization subject to the constraint \eqref{f_sum}.

We observe that, using the Dynamic Programming Principle, Carol needs to pick the weights on edges $f$ so that every potential path, if played optimally, has equal weight. This necessitates that formula \eqref{v_sum} holds so Lemma \ref{value} also holds. From there on, the analysis is analogous.  Therefore, for any 
$t\in \N_0, t\leq T$ and $m\in\mathcal{M},$ we have that 
\begin{equation}\label{tilde_equals}
\tilde{v}(t,m)=v(t,m).
\end{equation}
\end{remark}

Here we present a game which is more challenging to analyze. It is one where some of the time Carol, the rest of the time Paul, picks edge weights. 

Consider the game described in Subsection \ref{game} but with the following modification. The rules of the walk are such that on some predetermined turns, known to both Paul and Carol and represented by the set $S\subset \{t, t+1, \dots,T-1 \},$ Paul picks weights and Carol picks an edge then, and the rest of the turns Carol picks weights, and Paul picks an edge then. 
Let $u_S(t, m)$ be the value function for this game. We prove the following theorem:

\begin{theorem}\label{u_S}
Consider  $t\in \N_0, t\leq T$ and $m\in \mathcal{M}.$ Let $S$ be a set of integers, each integer between $t$ and $T.$ Then,
\begin{equation}
u_S(t,m)=v(t,m).
\end{equation}
\end{theorem}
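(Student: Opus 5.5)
We argue by backward induction on $t$, from $t=T$ down to the initial turn. The key observation is that at every turn the one-step recursion for $u_S$ reduces to the averaging formula of Lemma \ref{value}, whichever player chooses the weights.

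First we write the Dynamic Programming Principle for $u_S$. At a turn $t\in S$, Paul picks weights subject to \eqref{f_sum} and Carol picks the edge. This gives
$$u_S(t,m)=\min_{f:\ \sum_\ell f(t,(m,m|\ell))=0}\ \max_{\ell}\big\{c(m)+f(t,(m,m|\ell))+u_S(t+1,m|\ell)\big\}.$$
At a turn $t\notin S$, Carol picks the weights and Paul picks the edge. This gives
$$u_S(t,m)=\max_{f:\ \sum_\ell f(t,(m,m|\ell))=0}\ \min_{\ell}\big\{c(m)+f(t,(m,m|\ell))+u_S(t+1,m|\ell)\big\}.$$
In both cases the terminal condition is $u_S(T,m)=c(m)$.

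The base case $t=T$ is immediate, since $u_S(T,m)=c(m)=v(T,m)$. For the inductive step, assume $u_S(t+1,\cdot)=v(t+1,\cdot)$ on all of $\mathcal{M}$. We then prove the one-step identity
$$\min_{\sum f_\ell=0}\max_\ell (f_\ell+a_\ell)\;=\;\max_{\sum f_\ell=0}\min_\ell (f_\ell+a_\ell)\;=\;\frac1n\sum_{\ell=0}^{n-1}a_\ell$$
for arbitrary reals $a_0,\dots,a_{n-1}$. The argument is elementary.
\begin{itemize}
\item Because the $f_\ell$ sum to zero, $\max_\ell(f_\ell+a_\ell)\ge \frac1n\sum_\ell(f_\ell+a_\ell)=\frac1n\sum_\ell a_\ell$. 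Similarly, $\min_\ell(f_\ell+a_\ell)\le \frac1n\sum_\ell a_\ell$.
\item Both bounds are attained by the choice $f_\ell=\frac1n\sum_{\hat\ell}a_{\hat\ell}-a_\ell$. This is exactly \eqref{star}, and it makes all $n$ options equal.
\end{itemize}
Applying the identity with $a_\ell=u_S(t+1,m|\ell)=v(t+1,m|\ell)$, in either case ($t\in S$ or $t\notin S$) we get
$$u_S(t,m)=c(m)+\frac1n\sum_{\ell=0}^{n-1}v(t+1,m|\ell)=v(t,m),$$
where the last equality is Lemma \ref{value}. This closes the induction. The case $t\notin S$ is also exactly the content of the Remark leading to \eqref{tilde_equals}, applied to a single step.

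The main obstacle is conceptual rather than computational. It is to justify that the value of the mixed game really obeys this turn-by-turn recursion when the roles alternate according to $S$. The relevant facts are that $S$ is known in advance to both players, so the game is a finite perfect-information game, and that backward induction applies to the composition of min--max and max--min stages. One must also check that the optimizing weights exist and are attained; this holds because the optimal choice is the explicit formula \eqref{star}, so every min and max is attained rather than merely approached. Finally, if Carol's edge choices are allowed to be randomized, Lemma \ref{probab} shows this does not change the one-step value, since all options are equalized at the optimum. The same reasoning covers Paul's randomized edge choices on turns $t\notin S$.
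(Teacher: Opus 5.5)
Your proof is correct and is essentially the paper's argument. Both use backward induction on $t$: the case $t\in S$ reduces through the Dynamic Programming Principle to the min--max step of Lemma~\ref{value}, and the case $t\notin S$ reduces to the max--min step of the Remark, i.e.\ \eqref{tilde_equals}. The one difference is that you prove the one-step identity $\min_f\max_\ell=\max_f\min_\ell=\frac1n\sum_\ell a_\ell$ directly from the averaging inequality, which is a cleaner justification than the paper's appeal to equalizing linear expressions.
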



Observe that the rules now have `double' optimization in the sense that in some situations Paul minimizes over picking an edge and assigning a weight distribution to next moves together, in one go (same applies to Carol maximizing). Thus, picking an edge and assigning weights now go together, as one optimization; what used to be a Dynamic Programming Principle statement for $v$ cannot be simply written as one, as it is not necessarily true that $\min_{f(t,m|\ell),\ell}=\min_{f(t,m|\ell)} \min_\ell,$ or $\max_{f(t,m|\ell),\ell}=\max_{f(t,m|\ell)} \max_\ell.$ We will nevertheless prove that for this game's optimal strategy indeed we can break the $\min$ (and the $\max$) into two pieces. 

Let us denote $m_0=m$ and $m_s=m|\ell_t|\ell_{t+1}|\dots|\ell_s.$
\begin{lemma}\label{u_S_eq}
Under the assumptions of Theorem \ref{u_S}, when $t\in S$ we have
{\footnotesize
\begin{align}\label{u_min}
u_S(t,m)&=\min_{f_t} \max_{\ell_t, f_{t+1}}\min\dots \Big\{\sum_{s=t}^T c(m_s)+\sum_{s=t}^{T-1} f(s,(m_{s-1}|m_s)) \Big\} = \\ \nonumber
&\min_{f_t} \max_{\ell_t}\max_{f_{t+1}}\min\dots \Big\{\sum_{s=t}^T c(m_s)+\sum_{s=t}^{T-1} f(s,(m_{s-1}| m_s)) \Big\} 
\end{align} 
}
Similarly, when $t\notin S$
{\footnotesize
\begin{align}\label{u_max}
u_S(t,m)&=\max_{f_t} \min_{\ell_t, f_{t+1}}\max\dots \Big\{\sum_{s=t}^T c(m_s)+\sum_{s=t}^{T-1} f(s,(m_{s-1}|m_s)) \Big\} = \\ \nonumber
&\min_{f_t} \max_{\ell_t}\min_{f_{t+1}}\max\dots \Big\{\sum_{s=t}^T c(m_s)+\sum_{s=t}^{T-1} f(s,(m_{s-1}| m_s)) \Big\} 
\end{align} 
}
\end{lemma}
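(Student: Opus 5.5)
Since the inner problem after the first edge choice is itself a game of the same type, I would argue by backward induction on the number of remaining turns $T-t$. I would prove simultaneously Lemma \ref{u_S_eq} and the statement of Theorem \ref{u_S}: for every $S$, $u_S(t,m)=v(t,m)$, with $v$ given by Lemma \ref{value}. The base case $t=T$ is trivial: no choices remain, so $u_S(T,m)=c(m)=v(T,m)$.

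For the inductive step, fix $t<T$ with $t\in S$. After Paul fixes $f_t$ and the edge $\ell_t$ is chosen, what remains is a game of exactly the same type, started at $(t+1,m|\ell_t)$ with the set $S\cap\{t+1,\dots,T-1\}$. Its value is $u_S(t+1,m|\ell_t)$ whenever the optimizations over $\ell_t$ and over the subsequent moves can be separated. The point is that a joint $\max_{\ell_t,f_{t+1}}\min\cdots$ can always be written as the iterated $\max_{\ell_t}\max_{f_{t+1}}\min\cdots$: a supremum over a product set equals the iterated supremum. The only subtlety is the constraint structure, and I would check that the admissible $f_{t+1}$ depends on $\ell_t$ only through the current vertex $m|\ell_t$ and the constraint \eqref{f_sum}, which makes the feasible set fibred over $\ell_t$. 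This yields the second line of \eqref{u_min}, and hence
\[
u_S(t,m)=\min_{f_t}\max_{\ell_t}\big\{c(m)+f(t,(m,m|\ell_t))+u_S(t+1,m|\ell_t)\big\}.
\]
By the induction hypothesis $u_S(t+1,\cdot)=v(t+1,\cdot)$. This is precisely \eqref{DPP_formula}, so Lemma \ref{value} gives $u_S(t,m)=v(t,m)$.

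The case $t\notin S$ is symmetric. The joint minimum over $(\ell_t,f_{t+1})$ splits into iterated minima in the same way, which gives the recursion with roles reversed. The argument of the Remark (the proof of \eqref{tilde_equals}) then applies: the optimal weights equalize the $n$ options via \eqref{v_sum}, and the resulting value is again $c(m)+\frac1n\sum_\ell v(t+1,m|\ell)=v(t,m)$.

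The main obstacle is justifying the interchange when the player who chooses the edge also chooses the next weights in one combined move. The objects being optimized are full strategies, so the inner expression is a nested min/max rather than a plain function. I would handle this by proving the induction hypothesis in the strong form that the inner nested expression, once $\ell_t$ is fixed, depends only on $(t+1,m|\ell_t)$ and equals $u_S(t+1,m|\ell_t)$. Once that holds, the joint optimization reduces to a supremum (or infimum) of a function over the finite index $\ell_t$ combined with the fibred inner variable, and the splitting is just associativity of suprema.

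I also need every minimum and maximum to be attained, or else to work with infima and suprema. Lemma \ref{value} shows that at the equalizing choice \eqref{star} the values are attained and finite. The linear unconstrained directions cause no unboundedness, because the constraint \eqref{f_sum} forces some $f$ to be nonnegative, so the opponent can always exploit any deviation from the equalizing choice.
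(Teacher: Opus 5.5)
Your proof is correct, but you justify the key step differently from the paper.

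The paper splits $\max_{\ell_t,f_{t+1}}$ with a game-specific argument. Paul's optimal $f_t$ equalizes the $n$ continuations, so Carol's choice of $\ell_t$ becomes irrelevant, and the joint maximum can therefore be broken up. You instead use the unconditional fact that a supremum over the fibred set $\{(\ell_t,f_{t+1}) : f_{t+1} \text{ admissible at } m|\ell_t\}$ equals the iterated supremum, once the inner nested expression is a well-defined number for each history. You then fold the Dynamic Programming Principle and Theorem~\ref{u_S} into one backward induction, whereas the paper proves them as separate lemmas afterwards.

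Your route buys rigor and generality:
\begin{itemize}
\item the splitting holds pointwise in $f_t$, while the paper's argument really only addresses the optimal, equalizing $f_t$;
\item it holds in any perfect-information game;
\item it shows that the paper's caution, that a joint optimum need not equal the iterated one, never actually arises.
\end{itemize}
The paper's route instead ties the splitting directly to the equalization that later yields $u_S=v$.

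One small point concerns $t\notin S$. There your splitting produces $\max_{f_t}\min_{\ell_t}\min_{f_{t+1}}\max\cdots$, whereas the statement as printed has $\min_{f_t}\max_{\ell_t}$ outermost, which is probably a typo. If you want the printed form literally, add one line: under $\sum_\ell f_\ell=0$, both $\max_f\min_\ell$ and $\min_f\max_\ell$ of $c(m)+f_\ell+w_\ell$ equal $c(m)+\frac{1}{n}\sum_\ell w_\ell$. Your induction already supplies this with $w_\ell=v(t+1,m|\ell)$.
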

\begin{proof}
First, consider statement \eqref{u_min}. Observe that the outermost operation is $\min,$ so Paul makes a move first; regardless of what the remaining optimization's values are, Paul knows that Carol will take advantage (pick the heaviest path), should all possible paths' values be different. Thus, Paul's optimal strategy is to make the heaviest path as light as possible, which occurs when the function $f_t$ makes all path's weights equal. As a result, Carol's choice of $\ell_t$ is irrelevant. Therefore, we can break the $ \max_{\ell_t, f_{t+1}}$ into two $\max$-es. The proof of formula \eqref{u_max} is analogous. 
\end{proof}

\begin{lemma}
The Dynamic Programming Principle applies to $u_S.$
\end{lemma}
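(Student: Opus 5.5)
The plan is to prove by backward induction on $t$, from $t=T$ down to $t=0$, that $u_S$ satisfies a one-step recursion of Dynamic Programming type. For $t\in S$ the recursion should read
\begin{equation*}
u_S(t,m)=\min_{f:\ \sum_{\ell}f(t,(m,m|\ell))=0}\ \max_{\ell}\big\{c(m)+f(t,(m,m|\ell))+u_S(t+1,m|\ell)\big\}.
\end{equation*}
For $t\notin S$ it is the same expression with $\max_f\min_\ell$ in place of $\min_f\max_\ell$. The terminal condition is $u_S(T,m)=c(m)$. The base case $t=T$ is immediate, since no moves remain and the cost is $c(m)$.

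For the inductive step, fix $t<T$ with $t\in S$; the case $t\notin S$ is symmetric. Start from the nested optimization defining $u_S(t,m)$ and use Lemma \ref{u_S_eq} to split the joint operator $\max_{\ell_t,f_{t+1}}$ into $\max_{\ell_t}\max_{f_{t+1}}$ (respectively $\min_{\ell_t}\min_{f_{t+1}}$). After this splitting, the operators applied after $\ell_t$ involve only the choices $f_{t+1},\ell_{t+1},\dots$. Separate the objective as
\begin{equation*}
\sum_{s=t}^{T}c(m_s)+\sum_{s=t}^{T-1}f(s,\cdot)=c(m)+f(t,(m,m|\ell_t))+\Big[\sum_{s=t+1}^{T}c(m_s)+\sum_{s=t+1}^{T-1}f(s,\cdot)\Big].
\end{equation*}
The first two terms are constant with respect to every later optimization, so they can be pulled out of all the inner $\min$/$\max$ operators. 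The bracketed remainder, optimized by the inner operators, is by definition the value of the same type of game started at $(t+1,m|\ell_t)$ with turn set $S\cap\{t+1,\dots,T-1\}$. By the inductive hypothesis this equals $u_S(t+1,m|\ell_t)$, which gives the recursion.

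The main obstacle is justifying that the split in Lemma \ref{u_S_eq} applies at every level of the nesting, not only at the outermost one. At each later turn the controlling player again optimizes jointly over the current edge and the next weight assignment, so the splitting has to be re-applied inside the bracket. I would handle this by building the splitting into the inductive hypothesis: assume that at time $t+1$ the nested expression already factors into alternating single optimizations and equals $u_S(t+1,\cdot)$. A subtle point is that Lemma \ref{u_S_eq} relies on the equalizing choice of $f_t$, which makes $\ell_t$ irrelevant. I would check this with the same linear-equalization argument used in Lemma \ref{value}. The equalizing $f_t$ is optimal for the controlling player whatever values the continuation takes. Consequently, choosing the edge together with the next weights gains nothing over choosing them sequentially, and the order of the remaining operators does not matter.

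Once the recursion holds, Theorem \ref{u_S} follows from it directly. By the argument of Lemma \ref{value} and the Remark giving \eqref{tilde_equals}, both the min-max and the max-min one-step operators yield $c(m)+\frac1n\sum_\ell u_S(t+1,m|\ell)$. Hence $u_S$ and $v$ satisfy the same recursion with the same terminal data, and so they coincide.
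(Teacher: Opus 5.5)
Your proposal is correct and follows essentially the same route as the paper's proof. Both argue by backward induction on $t$, use Lemma~\ref{u_S_eq} to break the joint operators into alternating single ones, pull $c(m)+f(t,(m,m|\ell_t))$ out of all later operators, and recognize the remaining nested optimization as $u_S(t+1,m|\ell_t)$; you supply details the paper only sketches.

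As a side remark, the split $\max_{\ell_t,f_{t+1}}=\max_{\ell_t}\max_{f_{t+1}}$ (and its $\min$ analogue) holds for any objective, because consecutive choices by the same player can always be optimized one after the other. So the equalization argument you single out as the main obstacle is not actually needed for this step.
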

\begin{proof}
The proof is by induction over $t= T, T-1, T-2, \dots$ The base cases are $T, T-1$ and they follow  trivially. The inductive step follows from Lemma \ref{u_S_eq} and the fact that once the optimization is over an alternating sequence of $f_s$ and $\ell_s,$ we can write $u(t,m)$ through a $\min\max$ (or $\max\min$) and its value $u(t+1,m|\ell).$ Therefore, we obtain the Dynamic Programming Principle. 
\end{proof}

\begin{proof} \textit{of Theorem \ref{u_S}.}
The proof is by induction on $T, T-1,\dots, t, \dots$
For $T, T-1$ the statement is true. Then, we assume that 
\begin{equation}
u_S(t,\tilde{m})=v(t,\tilde{m})
\end{equation}
holds for all $\tilde{m}\in \{0,1,\dots, n-1\}^d$ and $t+1, t+2,\dots, T.$
Let $\tilde{S}=S\setminus t.$

Case 1: Suppose $t\in S.$ 
Then, by the Dynamic Programming Principle and by the inductive step, we obtain
\begin{align}
u_S(t,m) &= \min_{f}\max_{\ell}\{c(m)+f(t, (m,m|\ell))+ u_{\tilde{S}}(t+1,m|\ell) \} = \nonumber \\
 &= \min_{f}\max_{\ell}\{c(m)+f(t, (m,m|\ell))+ v(t+1,m|\ell) \} = \nonumber \\
 &\equiv v(t,m). \nonumber
\end{align}
Case 2: Suppose $t\notin S.$ Then, using the Dynamic Programming Principle and \eqref{tilde_equals}, it follows that
\begin{align}
u_S(t,m) &= \max_{f}\min_{\ell}\{c(m)+f(t, (m, m|\ell))+ u_{\tilde{S}}(t+1,m|\ell) \} = \nonumber \\
  &= \max_{f}\min_{\ell}\{c(m)+f(t,(m,m|\ell))+ \tilde{v}(t+1,m|\ell) \} = \nonumber \\
 &\equiv \tilde{v}(t,m) = v(t,m).\nonumber
\end{align}
The last equality is \eqref{tilde_equals}.
\end{proof}

\section{General Graphs}\label{general_graphs}
A lot of the ideas presented in this paper are not restricted to deBruijn graphs, even though using them makes notation and ideas clearer.
In this section we present a result for more general graphs --- a generalization of the problem to any directed graph without sinks.
Let us denote $G=\{V, E\}.$ Let $V=\{v_0,\dots v_{N-1} \},$ where $N=|V|.$
Let $u(t, v_m)$ be the value function.

Instead of thinking deterministically, we will use probability. We observe that we can introduce a random process that governs the movement of the token of the game. We define this random process as follows: at any vertex, a successor of this vertex is picked uniformly at random. Because of Lemmas \ref{value} and \ref{probab}, we know that the movement of the token is such that, after Paul picks all the weights, the value function  along any path is equal. Therefore, the random process is: at any vertex, the token of the game is moved by Carol with equal probability to a neighbouring outgoing vertex.  

Let us denote with $C_{k,m}$ a random variable that represents the cost of the vertex that the token reaches after k steps, starting from vertex $v_m$.
Then, Lemmas 1 and 2 still hold.
\begin{theorem}
The explicit formula for $u$ is
$$u(t, v_m)=\sum_{s=t}^T \E [C_{T-s, m}].$$
\end{theorem}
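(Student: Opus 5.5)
Following the paper's approach for deBruijn graphs, I would prove the formula by backward induction on $t$, from $t=T$ down to $t=0$. The main tool is the Dynamic Programming Principle, specialised to a general graph without sinks. For a vertex $v_m$ with out-degree $k_m\ge 1$ and out-neighbours $w_1,\dots,w_{k_m}$, it reads
$$u(t,v_m)=\min_{f:\ \sum_{j} f(t,(v_m,w_j))=0}\ \max_{j}\big\{c(v_m)+f(t,(v_m,w_j))+u(t+1,w_j)\big\}.$$
The absence of sinks guarantees $k_m\ge 1$, so this recursion is well defined at every vertex and every time $t<T$.

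\textbf{Step 1: the one-step identity.} I would first redo the argument of Lemma \ref{value} with $n$ replaced by $k_m$. Paul's optimal choice equalises all options, which gives
$$f(t,(v_m,w_j))=\frac{1}{k_m}\sum_{i}u(t+1,w_i)-u(t+1,w_j).$$
Substituting back yields
$$u(t,v_m)=c(v_m)+\frac{1}{k_m}\sum_{j=1}^{k_m}u(t+1,w_j)=c(v_m)+\E\big[u(t+1,X_1)\,\big|\,X_0=v_m\big],$$
where $(X_s)$ is the random walk that moves to a uniformly chosen out-neighbour at each step. By Lemma \ref{probab}, Carol may as well move the token according to exactly this walk, which justifies the probabilistic description. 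I also note that $\E[C_{k,m}]=\E[c(X_k)\mid X_0=v_m]$.

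\textbf{Step 2: the induction.} The base case is $u(T,v_m)=c(v_m)=\E[C_{0,m}]$, which matches the single term $s=T$ of the claimed sum. For the inductive step, assume $u(t+1,w)=\sum_{s=t+1}^{T}\E[C_{T-s,w}]$ for every vertex $w$; here $C_{k,w}$ denotes the cost after $k$ steps from $w$. Then, by the Markov property and the tower property of conditional expectation,
$$\E\big[C_{k,w_J}\big]=\E\big[C_{k+1,m}\big],$$
where $J$ is uniform on the out-neighbours of $v_m$. Hence $\E[u(t+1,X_1)\mid X_0=v_m]=\sum_{s=t+1}^{T}\E[C_{T-s+1,m}]$.

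\textbf{Step 3: the indexing.} Adding $c(v_m)=\E[C_{0,m}]$ and reindexing produces $\sum_{k=0}^{T-t}\E[C_{k,m}]$. This is the sum over all step counts $k=0,\dots,T-t$, which is the claimed expression once the index is written as $k=T-s$, giving $\sum_{s=t}^{T}\E[C_{T-s,m}]$.

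\textbf{Main obstacle.} I expect the hardest step to be the careful application of the Markov property in Step 2. The issue is that $C_{k,w}$ is defined for a fixed starting vertex, so one must check that averaging over the uniformly chosen first step really gives $C_{k+1,m}$. The argument is that the walk started at $v_m$, conditioned on its first step landing at $w$, has the same law as the walk started at $w$. It is also worth making explicit that non-uniform out-degrees do not break the equalisation argument of Lemma \ref{value}, since that argument only uses the constraint $\sum_j f=0$ at each vertex separately.
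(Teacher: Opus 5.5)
Your proposal is correct and is essentially the paper's proof: backward induction on $t$, using the one-step identity $u(t,v_m)=c(v_m)+\frac{1}{k_m}\sum_j u(t+1,w_j)$ from the equalisation argument of Lemma \ref{value}, then the first-step decomposition $\frac{1}{k_m}\sum_j \E[C_{k,w_j}]=\E[C_{k+1,m}]$, then a reindexing. Your index bookkeeping is cleaner than the paper's displayed proof, which writes this shift as $\E[C_{t-1,m}]$ and $C_{T-s-1,m}$ where $\E[C_{t+1,m}]$ and $C_{T-s+1,m}$ are meant.
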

\begin{proof}
We proceed by induction on time left.
We observe that 
$$u(T, v_m)=c(v_m) = \E[C_{0,m} ]. $$ 
and
$$u(T-1, v_m)= c(v_m) + \frac{1}{|N_{v_m}|}\sum_{v_j\in N_{v_m}}u(T, v_j)= \E[C_{0,m}+C_{1,m}]. $$
Suppose
$$u(S, v_i)= \sum_{s=S}^T \E[C_{s,i}] $$
for all $i=0, \dots, N-1$ and $S=t,\dots, T.$ 
Observe that for any vertex $v_m,$ because all successors of $v_m$ are equally likely to be visited, we have
\begin{equation}\label{fact}
 \frac{1}{|N_{v_m}|}\sum_{v_j\in N_{v_m}} \E[C_{t,j}] = \E[C_{t-1,m}].
\end{equation}
Then, using \eqref{fact}, we obtain

\begin{align}
u(t-1, v_m) =& c(v_m) +  \frac{1}{|N_{v_m}|}\sum_{v_j\in N_{v_m}}u(t, v_j) = \\ \nonumber 
=& c(v_m) +  \frac{1}{|N_{v_m}|}\sum_{v_j\in N_{v_m}}\sum_{s=t}^T \E[C_{T-s,j}] = \\ \nonumber
=& \E[C_{0,m}] + \sum_{s=t}^T \frac{1}{|N_{v_m}|}\sum_{v_j\in N_{v_m}}\E[C_{T-s,j}]\\ \nonumber
=&  \E[C_{0}] + \sum_{s=t}^T\E[C_{T-s-1,m}]. \nonumber
\end{align}
Thus, the statement is true for $t-1$, too. By induction, the statement holds for any time $t\leq T$. 

\end{proof}
Let us denote with $z_{j,m,T-s}$ the number of ways the token can move from vertex $v_m$ to vertex $v_j$ in precisely $T-s$ steps.
We have the following corollary for $k-$regular graphs:
\begin{corollary}
For a $k-$regular graph, the the value function can be expressed through the following succinct formula
$$u(t, v_m)=\sum_{s=t}^T\frac{1}{k^{T-s}}\Big(\sum_{j=0}^{N-1}c(v_j) z_{j, m, T-s} \Big).$$
\end{corollary}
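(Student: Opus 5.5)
Starting point: the general-graph theorem just proved, $u(t,v_m)=\sum_{s=t}^T \E[C_{T-s,m}]$, with $C_{k,m}$ the cost of the vertex reached after $k$ steps of the walk from $v_m$ that picks each outgoing neighbour uniformly at random. For a $k$-regular graph every vertex has exactly $k$ out-neighbours, so each walk of length $k'$ from $v_m$ has probability $k^{-k'}$. The plan is to compute $\E[C_{T-s,m}]$ explicitly and substitute it into that formula.

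First, fix $r=T-s\ge 0$. A walk of length $r$ from $v_m$ is a sequence of $r$ choices, each among $k$ out-neighbours, so there are exactly $k^r$ such walks. Since every choice is uniform and independent, each walk has probability $k^{-r}$. Second, group the walks by their endpoint. By definition, exactly $z_{j,m,r}$ of them end at $v_j$. Hence
$$\E[C_{r,m}]=\sum_{j=0}^{N-1} c(v_j)\,\Pr[\text{walk ends at } v_j]=\frac{1}{k^{r}}\sum_{j=0}^{N-1}c(v_j)\,z_{j,m,r}.$$
Third, substitute this with $r=T-s$ into the formula for $u$ and sum over $s=t,\dots,T$. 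This gives the stated expression. In the case $s=T$ we have $z_{j,m,0}=\delta_{jm}$, which recovers the term $c(v_m)$.

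The only delicate point is the counting convention for $z_{j,m,r}$. It must count walks (sequences of edges), with multiplicity when there are parallel edges or loops. Only then are the $k^r$ walks equally likely, and only then does $z_{j,m,r}=(A^r)_{mj}$ hold, where $A$ is the adjacency matrix. I would state this convention explicitly.

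As an alternative check that avoids probability, I would also run an induction on $T-t$ using the recursion $u(t,v_m)=c(v_m)+\frac1k\sum_{v_j\in N_{v_m}}u(t+1,v_j)$ (Lemma \ref{value} adapted to general graphs). This induction rests on the identity $z_{j,m,r+1}=\sum_{v_i\in N_{v_m}} z_{j,i,r}$, which decomposes a walk by its first step.

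For the de Bruijn graph with $k=n$, the count $z_{j,m,r}$ equals $n^{r-d}$ for $r\ge d$. In that case the formula reproduces \eqref{star_new}.
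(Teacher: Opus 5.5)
Your proposal is correct and follows the paper's own route. Like the paper, you substitute into $u(t,v_m)=\sum_{s=t}^T\E[C_{T-s,m}]$ and evaluate each expectation by counting the $z_{j,m,T-s}$ walks of length $T-s$, each of probability $k^{-(T-s)}$. Your normalization $k^{-(T-s)}$ is the right one; the paper's one-line proof writes $N^{T-s}$ there, which is a typo.
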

\begin{proof}
Observe that $|N_{v_m}=k|$ and therefore $\E[C_{s,m}|v_m]=\frac{1}{N^{T-s}}\sum_{j=0}^{N-1}c(v_j) z_{j,m,T-s}.$
\end{proof}

\section{Conclusion}
This paper considers repeated two-person games on a deBruijn graph on any number of symbols, and any length of words. We provide closed form formulas for the optimal strategies associated to these games. The games help analyze an overdetermined linear system of equations, and aid in the proof of the existence of a unique solution to this overdetermined system. 
The existence of an assignment so that all cycles have the same average weight is unexpected, because there are many more cycles than variables as weights on edges. The degrees of freedom that we have are the number of all vertices, times $(n-1)$ (because of the conditions that the sum of the weights of the $n$ edges coming out of the same vertex is zero). This is precisely $(n-1)n^d.$

Thus, for any cycle, we have a linear combination of $c(m), f(m,m|\ell), c(m|\ell)...$ This linear combination has coefficients $1$ or $0,$ based upon whether a vertex (or an edge) is a part of the cycle or not. This produces an overdetermined linear system of equations, so a priori it is not clear that such a system has a solution.

An open question is how the games' results can be extended to general directed graphs. 
Another question is what kind of related adversarial games exist on deBruijn graphs, that have closed form solutions.

\bibliography{ref}

\bibliographystyle{abbrv}

\end{document}